\newtheorem{theorem}{Theorem}[section]
\newtheorem{lemma}[theorem]{Lemma}
\newtheorem{corollary}[theorem]{Corollary}
\theoremstyle{definition}
\newtheorem{definition}[theorem]{Definition}
\theoremstyle{remark}
\numberwithin{equation}{section}
\begin{document}

\setcounter{page}{1}

\title[2-local derivations on Witt algebras]{2-local derivations on Witt algebras}

\author{Yueqiang Zhao}
\address{School of Mathematical Sciences, Hebei Normal University, Shijiazhuang 050016, Hebei, China}
\email{yueqiangzhao@163.com}

\author{Yang Chen}
\address{Mathematics Postdoctoral Research Center,
Hebei Normal University, Shijiazhuang 050016, Hebei, China}
\email{chenyang1729@hotmail.com}

\author{Kaiming Zhao}
\address{Department of Mathematics, Wilfrid
Laurier University, Waterloo, ON, Canada N2L 3C5,  and School of Mathematical Sciences, Hebei Normal University, Shijiazhuang 050016, Hebei, China}
\email{kzhao@wlu.ca}



\date{}
\maketitle

\begin{abstract} In this paper, we prove that every 2-local derivation on   Witt algebras $W_n, W_n^+$ or $W_n^{++} $  is a derivation for all $n=1,2,\cdots,\infty$. As a consequence we obtain that every 2-local derivation on any centerless generalized Virasoro algebra of higher rank is a derivation.

\

{\it Keywords:} Lie algebra, Witt algebra, derivation, 2-local derivation.

{\it AMS Subject Classification:} 17B05, 17B40, 17B66.
\end{abstract}

\section{Introduction}

In 1997, \v{S}emrl \cite{Sem} introduced the notion of 2-local derivations on algebras. Namely, for an associative algebra $A$, a map $\Delta : A\to A$
(not necessarily linear) is called a \textit{2-local
derivation} if for every $x,y\in A$, there exists a
derivation $D_{x,y} : A\to A$ such that
$\Delta(x)= D_{x,y} (x)$ and $\Delta(y)= D_{x,y}(y)$. The concept of 2-local derivation is actually an important and interesting property for an algebra. Recently, several papers have been devoted to similar notions and corresponding problems for   Lie (super)algebras $L$. The main problem in this subject is to determine all 2-local derivations, and to see  whether 2-local derivations automatically become a
derivation  of $L$. In \cite{AyuKudRak} it is proved that every 2-local derivation on a finite-dimensional
semi-simple Lie algebra $L$ over complex numbers is a derivation and that
each finite-dimensional nilpotent Lie algebra with dimension
larger than two admits a 2-local derivation which is not a
derivation. In \cite{Yus} and \cite{AY} Ayupov and Yusupov studied 2-local derivations on univariate Witt algebras.
In the present paper we study 2-local derivations on
multivariate Witt algebras.

The Witt algebra $W_n$ of vector fields on an $n$-dimensional torus  is the derivation Lie algebra of the Laurent
polynomial algebra $A_n=\mathbb{C}[t_1^{\pm1},t_2^{\pm1},\cdots,
t_n^{\pm1}]$. Witt algebras were one of the four classes of Cartan type Lie algebras originally introduced in 1909 by Cartan \cite{C} when he studied
infinite dimensional simple Lie algebras. Over the last two decades, the  representation theory of Witt algebras was  extensively
studied by many mathematicians and physicists; see for example \cite{BF, BMZ, GLLZ}. Very recently,  Billig and Futorny
\cite{BF}   obtained the classification for all simple Harish-Chandra
$W_n$-modules.

The paper is organized as follows. In Section 2 we recall some known results  and establish some related properties concerning   Witt algebras and prove an auxiliary result for later use. In Section 3 we prove that every 2-local derivation on Witt algebras $W_n$ is a derivation. As a consequence we obtain that every 2-local derivation on any centerless generalized Virasoro algebra of higher rank is a derivation. In Section 4 we obtain similar results for $W_\infty$. Finally, in Section 5 we show that the above methods and conclusions are applicable for Witt algebras $W_n^+,\ W_n^{++},\ W_\infty^{+}$ and $W_\infty^{++}$.

Throughout this paper, we denote by $\mathbb{Z}$, $\mathbb{N}$, $\mathbb{Z}_+$ and $\mathbb{C}$ the sets of  all integers, positive integers, non-negative integers and complex numbers respectively. All algebras are over $\mathbb{C}$.
\medskip

\section{The Witt algebras}

\medskip

In this section we recall definitions, symbols and establish some auxiliary
results for later use in this paper.

A derivation on a Lie algebra $L$ is a linear map
$D: L\rightarrow L$ which satisfies the Leibniz
law
$$
D([x,y])=[D(x),y]+[x, D(y)],\
\forall x,y\in L.$$ The set of all derivations of
$L$   is a Lie algebra and
is denoted by $\text{Der}(L)$. For any $a\in L$,
the map
$$\text{ad}(a): L\to L, \ \text{ad}(a)x=[a,x],\ \forall  x\in L$$ is a derivation and derivations of this form are called
\textit{inner derivations}. The set of all inner derivations of
$L$, denoted by $\text{Inn}(L),$ is an ideal in
$\text{Der}(L).$ A derivation $D$ of $L$ is a \textit{locally inner derivation} if for any finite subset $S$ of $L$ there exist a finite-dimensional subspace $V$ of $L$ containing $S$ and $a\in L$ such that $D\big|_V= \text{ad}(a)\big|_V$.

For $n\in\mathbb{N}$, let $A_n= \mathbb{C}[t_1^{\pm1},t_2^{\pm1},\cdots,t_n^{\pm1}]$ be the Laurent polynomial algebra and $W_n= \text{Der}(A_n)$ be the Witt algebra of vector fields  on an $n$-dimensional torus. Thus $W_n$ has a natural structure of an $A_n$-module, which is free of rank $n$. We choose $d_1= t_1\frac{\partial}{\partial{t_1}}, \ldots, d_n= t_n\frac{\partial}{\partial{t_n}}$ as a basis of this $A_n$-module:
$$W_n=\bigoplus_{i=1}^n A_n d_i.$$
Denote  $t^\alpha=t_1^{\alpha_1}\cdots t_n^{\alpha_n}$ for $\alpha= (\alpha_1, \ldots, \alpha_n)\in \mathbb{Z}^n$ and let $\{\epsilon_1, \ldots, \epsilon_n\}$ be the standard basis of $\mathbb{Z}^n$. Then we can write the Lie bracket in $W_n$ as follows:
$$[t^\alpha d_i, t^\beta d_j]= \beta_i t^{\alpha+ \beta} d_j- \alpha_j t^{\alpha+ \beta} d_i,\ i, j= 1, \ldots, n;\ \alpha, \beta\in \mathbb{Z}^n.$$
The subspace $\mathfrak{h}$ spanned by $d_1, \ldots, d_n$ is the Cartan subalgebra in $W_n$. We may write any nonzero element in $W_n$ as $\sum_{\alpha\in S} t^\alpha d_\alpha$, where $S$ is the finite subset consisting of all  $\alpha\in  \mathbb{Z}^n$ with $d_\alpha\in \mathfrak{h}\setminus\{0\}$. Suppose that $d_\alpha= c_1 d_1+ \cdots+ c_n d_n\in \mathfrak{h}$ and $\beta\in \mathbb{Z}^n$, let $$(d_\alpha, \beta)=c_1 \beta_1+ \cdots+ c_n \beta_n.$$ Then we get the following formula
$$[t^\alpha d_\alpha, t^\beta d_\beta]= t^{\alpha+ \beta}((d_\alpha, \beta)d_\beta- (d_\beta, \alpha)d_\alpha).$$

Denote by $\mathbb{Z}^\infty$ (resp. $\mathbb{Z}_+^\infty$, $\mathbb{C}^\infty$) the set of all infinite sequences $\alpha= (\alpha_1, \alpha_2, \alpha_3, \cdots)$ where $\alpha_i\in \mathbb{Z}$ (resp. $\mathbb{Z}_+$, $\mathbb{C}$) and only a finite number of $\alpha_i$ can be nonzero. We can obtain similar definition and formula for the Witt algebra $W_\infty=\text{Der}(\mathbb{C}[t_1^{\pm1},t_2^{\pm1},\cdots]).$

\begin{definition}\label{def} \textit{We call a vector $\mu=(\mu_1, \mu_2, \cdots,\mu_n )\in \mathbb{C}^n$ generic if $$\mu\cdot \alpha=\alpha_1\mu_1+\alpha_2\mu_2+ \cdots+\alpha_n\mu_n\neq 0,\,\,\forall\alpha=(\alpha_1, \alpha_2, \cdots,\alpha_n )\in \mathbb{Z}^n \setminus \{0\}.$$ }
\end{definition}

Note that a vector $\mu=(\mu_1, \mu_2, \cdots,\mu_n )\in \mathbb{C}^n$ is generic if and only if $\mu_1, \mu_2, \cdots,\mu_n $ are linearly independent over the rational numbers $\mathbb{Q}$.
There are always generic vectors. For example, $\mu= (\zeta, \zeta^2, \ldots, \zeta^n)\in \mathbb{C}^n$ is just a generic vector, where $\zeta$ is a algebraic number of degree more than $n$ or a transcendental number in $\mathbb{C}$. Let $\mu$ be a generic vector in $\mathbb{C}^n$ and $d_\mu= \mu_1 d_1+ \cdots+ \mu_n d_n$.
Then we have the Lie subalgebra of $W_n$:
$$W_n(\mu)= A_n d_\mu,$$
which is called (centerless) generalized Virasoro algebra of rank $n$, see \cite{PZ}.

We proof the following key lemma at first.

\begin{lemma}\label{keylem} \textit{For any $k\in \mathbb{Z}\setminus \{0\}$,
the centralizer of $(t_1^k+ \cdots+ t_n^k)d_\mu$ in $W_n$ is \\ $\mathbb{C}(t_1^k+ \cdots+ t_n^k)d_\mu$}.
\end{lemma}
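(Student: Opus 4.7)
\emph{Plan.} Writing a general element $X=\sum_{\alpha\in S}t^\alpha d_\alpha\in W_n$ with $S$ finite and $d_\alpha\in\mathfrak{h}\setminus\{0\}$, the bracket formula displayed above expands $[X,x]=0$ into the system of equations
$$\sum_{i=1}^n\bigl((d_{\gamma-k\epsilon_i},k\epsilon_i)\,d_\mu\;-\;(d_\mu,\gamma-k\epsilon_i)\,d_{\gamma-k\epsilon_i}\bigr)=0,\qquad \gamma\in\mathbb{Z}^n,$$
under the convention $d_\alpha=0$ for $\alpha\notin S$. The technical engine is an extremal argument: for any $\mathbb{Q}$-generic linear functional $f$ on $\mathbb{Z}^n$ and the $f$-maximum $\alpha^*$ of the relevant support, setting $\gamma=\alpha^*+k\epsilon_{i^*}$ with $i^*$ the index maximizing $f(\epsilon_i)$ (or minimizing it when $k<0$) makes every summand with $i\neq i^*$ vanish, because $\gamma-k\epsilon_i$ then exceeds $\alpha^*$ in the $f$-order and so lies outside the support.

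First I would run this extremal argument after projecting onto a complement $\mathfrak{h}'$ of $\mathbb{C}d_\mu$ in $\mathfrak{h}$. Writing $d_\alpha=c_\alpha d_\mu+h_\alpha$ with $h_\alpha\in\mathfrak{h}'$, the $\mathfrak{h}'$-component of the equations reads $\sum_i(d_\mu,\gamma-k\epsilon_i)\,h_{\gamma-k\epsilon_i}=0$; the argument yields $(d_\mu,\alpha^*)\,h_{\alpha^*}=0$, and genericity of $\mu$ forces $\alpha^*=0$, so $h_\alpha=0$ for every $\alpha\neq 0$, i.e.\ $d_\alpha\in\mathbb{C}d_\mu$ for every $\alpha\in S\setminus\{0\}$. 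Rerunning the extremal argument on the full equation then gives, for $\alpha^*\neq 0$, the relation $c_{\alpha^*}\bigl(k\mu_{i^*}-(d_\mu,\alpha^*)\bigr)=0$, whence $\alpha^*=k\epsilon_{i^*}$ by genericity. Every extreme point of $\mathrm{conv}(S)$ thus lies in $\{0,k\epsilon_1,\ldots,k\epsilon_n\}$, and $S$ is contained in the lattice simplex $\{\beta\in\mathbb{Z}_{\geq 0}^n:\sum_i\beta_i\leq k\}$ (reflected when $k<0$).

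To rule out the remaining interior lattice points I would plug any hypothetical $\beta\in S\setminus\{0,k\epsilon_1,\ldots,k\epsilon_n\}$ into the equation at $\gamma=\beta+k\epsilon_j$: since $0\leq\beta_i\leq k-1$ for each $i$ (otherwise $\beta=k\epsilon_i$), the point $\gamma-k\epsilon_i=\beta+k(\epsilon_j-\epsilon_i)$ has a strictly negative $i$-th coordinate for $i\neq j$ and so lies outside $S$; the equation collapses to $c_\beta\bigl(k\mu_j-(d_\mu,\beta)\bigr)=0$, and genericity forces $\beta=k\epsilon_j$, a contradiction. Thus $S\subseteq\{0,k\epsilon_1,\ldots,k\epsilon_n\}$ and $X=d_0+\sum_i c_i\,t^{k\epsilon_i}d_\mu$. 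Computing $[X,x]$ directly: the coefficient of $t^{k\epsilon_i}$ forces $(d_0,\epsilon_i)=0$ for every $i$, so $d_0=0$, and the coefficient of $t^{k(\epsilon_m+\epsilon_l)}$ with $m\neq l$ forces $(\mu_m-\mu_l)(c_l-c_m)=0$, so $c_1=\cdots=c_n$ by genericity and $X\in\mathbb{C}x$. The main obstacle I anticipate is precisely this interior lattice point elimination: the extremal argument by itself only controls the vertices of $\mathrm{conv}(S)$, so the argument must be completed by combining the commutation equations with the explicit lattice geometry (nonnegativity and bounded coordinate sum) of $\mathrm{conv}\{0,k\epsilon_1,\ldots,k\epsilon_n\}$ to cut off each remaining candidate support point.
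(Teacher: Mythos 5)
Your proof is correct, and its engine coincides with the paper's: the paper's Claim~1 is exactly your extremal step run with $f$ equal to the $i$-th coordinate functional (``maximal degree with respect to $t_i$''), producing $(d_{\alpha^*},k\epsilon_{i^*})d_\mu-(d_\mu,\alpha^*)d_{\alpha^*}=0$ and hence, by genericity of $\mu$, $d_{\alpha^*}\in\mathbb{C}d_\mu$ and $\alpha^*=k\epsilon_{i^*}$. Where you genuinely diverge is in the endgame. The paper equates the leading coefficients via the cross terms $[c_1t_1^kd_\mu,t_i^kd_\mu]$, subtracts $c_1(t_1^k+\cdots+t_n^k)d_\mu$, and re-applies Claim~1 to the remainder $x'$ to push it into $\mathfrak{h}$ and then to zero. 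You instead (i) first split off the $\mathfrak{h}'$-component so that every nonzero-weight coefficient is known to be a multiple of $d_\mu$ before any support analysis, (ii) conclude that every vertex of $\mathrm{conv}(S)$ lies in $\{0,k\epsilon_1,\ldots,k\epsilon_n\}$, and (iii) eliminate the interior lattice points of the resulting simplex by evaluating the coefficient of $t^{\beta+k\epsilon_j}$ and using nonnegativity of the coordinates of $S$. Step (iii) is the substantive difference: the coordinate-degree argument by itself says nothing about support points that are not extremal in any coordinate direction (for instance exponents with all coordinates negative, which the paper's ``apply Claim~1 to $x'$'' step passes over in silence), whereas your convex-hull bound plus interior-point elimination disposes of all of them explicitly, and your closing computation ($d_0=0$ from the coefficient of $t^{k\epsilon_j}$, $c_1=\cdots=c_n$ from the coefficient of $t^{k(\epsilon_m+\epsilon_l)}$) matches the paper's. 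The cost is length; the gain is that no subtract-and-iterate loop or separate minimal-degree companion argument is needed, so your write-up is the more airtight rendering of the same underlying idea.
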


\begin{proof} We firstly deal with the case $k>0$. Let $x\in  W_n\setminus\{0\}$ such that $[x, (t_1^k+ \cdots+ t_n^k)d_\mu]= 0$.
We may write
 \begin{equation}\label{1}x= \sum_{\alpha\in S} t^\alpha d_\alpha\end{equation}
 where $S$ is the finite subset consisting of  all $\alpha\in  \mathbb{Z}^n$ with $d_\alpha\in \mathfrak{h}\setminus\{0\}$.
Take a term $t^\beta d_\beta$ of maximal degree with respect to some $t_i$ in the right-hand-side of (\ref{1}). Suppose $\beta\neq 0$ (if it exists).

{\bf Claim 1.} We must have $t^\beta d_\beta=c_i t_i^k d_\mu$ for some $c_i\in\mathbb{C}. $

The  term $[t^\beta d_\beta, t_i^k d_\mu]$ is  of maximal degree with respect to $t_i$ in $[x, (t_1^k+ \cdots+ t_n^k)d_\mu]$. We must have
$$[t^\beta d_\beta, t_i^k d_\mu]= t^{\beta+ k\epsilon_i}((d_\beta, k\epsilon_i)d_\mu-
(d_\mu, \beta)d_\beta)= 0.$$ Since $(d_\mu, \beta)\neq 0$, we have $d_\beta= c_i d_\mu$ for some
$0\neq c_i\in \mathbb{C}$ and furthermore
$$(d_\beta, k\epsilon_i)d_\mu- (d_\mu, \beta)d_\beta= (c_i d_\mu, k\epsilon_i)d_\mu- (d_\mu, \beta)c_i d_\mu= c_i(d_\mu, k\epsilon_i- \beta)d_\mu= 0.$$
So $\beta= k\epsilon_i$. Claim 1 follows.

Now $c_i t_i^k d_\mu$ is the only possible term of maximal degree with respect to $t_i$ in  (\ref{1}). If there is such a
term, we consider $c_1 t_1^k d_\mu$ without loss of generality. For $i\neq 1$, to delete the term $[c_1t_1^k d_\mu, t_i^k d_\mu]= c_1 k(\mu_i- \mu_1)t_1^k t_i^k d_\mu\neq 0$ in $[x, (t_1^k+ \cdots+ t_n^k)d_\mu]$, there must be the term $c_1 t_i^k d_\mu$
in  (\ref{1}). Thus $c_i= c_1$ for all $1\leq i\leq n$. Let $x'= x- c_1(t_1^k+ \cdots+ t_n^k)d_\mu$. Then $[x', (t_1^k+ \cdots+ t_n^k)d_\mu]= 0$.
Applying Claim 1 to $x'$, we see that the term $d$ of maximal degree with respect to any $t_i$ in $x'$ is in the Cartan subalgebra $\mathfrak{h}$.
Therefore
$$[d, t_i^k d_\mu]= (d, k\epsilon_i) t_i^k d_\mu= 0,\ \forall i= 1,\ldots, n.$$
We obtain that  $d=0$ and must have  $x'= 0$. For $k<0$, we may take terms of minimal degree instead. So for any $k\in \mathbb{Z}\setminus \{0\}$, the centralizer of
$(t_1^k+ \cdots+ t_n^k)d_\mu$ is only $\mathbb{C}(t_1^k+ \cdots+ t_n^k)d_\mu$.
\end{proof}

From Proposition 4.1 and Theorem 4.3 in \cite{DZ} we know that
 any derivation on $W_n$ (resp. $W_\infty$) is inner (resp. locally inner). Then for the Witt algebra $W=W_n$ or $W_\infty$ the above definition of the 2-local derivation can be reformulated as follows. A map
$\Delta$ on $W$  is a 2-local derivation on $W$ if
for any two elements $x, y \in W$ there exists an element
$a_{x, y} \in W$ such that
$$\Delta(x) = [a_{x,y} ,x],\,\,\,  \Delta(y) = [a_{x,y}, y].$$
In general,  a 2-local derivation  $\Delta$ on a Lie algebra $L$ is a derivation if
for any two elements $x, y \in L$ and $c\in\mathbb{C}$ we have
$$\Delta(x+cy) =\Delta(x) +c\Delta(y) ,\,\,\,  \Delta([x, y]) = [\Delta(x),y] +[x,\Delta(y) ].$$

\section{2-Local derivations on $W_n$}

Now we shall give the main result concerning 2-local derivations on $W_n$.

\begin{theorem}\label{thm31} \textit{Every 2-local derivation
on  the Witt algebra $W_n$ is a derivation.}
\end{theorem}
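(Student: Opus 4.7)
My plan is as follows. Since every derivation on $W_n$ is inner (by Proposition~4.1 of \cite{DZ}) and the difference of a 2-local derivation with a genuine derivation is itself a 2-local derivation, it suffices to subtract an appropriate inner derivation from $\Delta$ and show that what remains is identically zero. Fix a generic vector $\mu\in\mathbb{C}^n$ and write $u_k=(t_1^k+\cdots+t_n^k)d_\mu$ for $k\in\mathbb{Z}\setminus\{0\}$; by Lemma~\ref{keylem}, the centralizer of each $u_k$ in $W_n$ is precisely $\mathbb{C}u_k$. The argument proceeds in three stages: first normalize $\Delta$ to vanish on both $u_1$ and $u_2$; next show $\Delta(u_k)=0$ for every $k\neq 0$; and finally show $\Delta(y)=0$ for every $y\in W_n$ by exploiting 2-locality at the pair $(u_k,y)$ for $|k|$ large.

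For the normalization, 2-locality applied at $(u_1,u_1)$ produces $a_0\in W_n$ with $\Delta(u_1)=[a_0,u_1]$, and replacing $\Delta$ by $\Delta-\text{ad}(a_0)$ kills $u_1$. Then 2-locality at $(u_1,u_2)$ combined with Lemma~\ref{keylem} produces a scalar $\lambda$ with $\Delta(u_2)=\lambda[u_1,u_2]$, so subtracting $\text{ad}(\lambda u_1)$ makes $\Delta(u_2)=0$ while preserving $\Delta(u_1)=0$. Next, for arbitrary $k\neq 0$, applying 2-locality at $(u_1,u_k)$ and at $(u_2,u_k)$ together with Lemma~\ref{keylem} yields scalars $\alpha_k,\beta_k$ with $\Delta(u_k)=\alpha_k[u_1,u_k]=\beta_k[u_2,u_k]$. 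A direct computation via the bracket formula shows that $[u_1,u_k]$ and $[u_2,u_k]$, viewed as elements of the free module $\bigoplus_i A_nd_i$, have monomial supports $\{\epsilon_i+k\epsilon_j\}$ and $\{2\epsilon_i+k\epsilon_j\}$ in $\mathbb{Z}^n$, which are disjoint for every nonzero $k$; hence, provided both brackets are nonzero, they are linearly independent, forcing $\alpha_k=\beta_k=0$. The only cases where a bracket vanishes are $k=1$ (for $[u_1,u_k]$) and $k=2$ (for $[u_2,u_k]$), but these $k$ are already handled by the normalization.

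For the final stage, fix any $y\in W_n$ and any $k\neq 0$. Applying 2-locality at $(u_k,y)$ and using $\Delta(u_k)=0$ with Lemma~\ref{keylem} yields a scalar $c_k$ (depending on $y$ and $k$) with $\Delta(y)=c_k[u_k,y]$. The exponent support of $[u_k,y]$ lies inside the set $\{k\epsilon_i+\gamma:1\le i\le n,\ \gamma\in\text{Supp}(y)\}$, and the $i$-th coordinate of every such exponent is $k+\gamma_i$. Choosing $|k|$ larger than every coordinate appearing in $\text{Supp}(y)\cup\text{Supp}(\Delta(y))$ renders the support of $[u_k,y]$ disjoint from $\text{Supp}(\Delta(y))$, so the identity $\Delta(y)=c_k[u_k,y]$ forces both sides to vanish. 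Thus $\Delta$ is identically zero, and the original map is the inner derivation $\text{ad}(a_0+\lambda u_1)$.

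The main obstacle I anticipate is the middle stage, where everything rests on the linear independence of $[u_1,u_k]$ and $[u_2,u_k]$ for $k\neq 0$: the disjoint-support observation handles this uniformly in $k$, but one must verify nonvanishing of both brackets (which uses the genericity of $\mu$) and carefully dispatch the edge cases $k=1,2$. Once this stage is secured, the first and third stages become routine applications of the centralizer description in Lemma~\ref{keylem}.
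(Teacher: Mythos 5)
Your argument is correct, and it shares the paper's overall architecture---use the fact that all derivations of $W_n$ are inner, subtract an inner derivation so that the 2-local derivation vanishes on two chosen elements, propagate the vanishing to all $u_k=(t_1^k+\cdots+t_n^k)d_\mu$ via the centralizer description in Lemma~\ref{keylem}, and finish by letting $|k|$ grow---but your implementation of the middle steps is genuinely different. The paper normalizes on the pair $(d_\mu,\,u_1)$ and exploits $\Delta(d_\mu)=0$ through Lemma~\ref{lem32}, which says such a $\Delta$ preserves the $\mathbb{Z}^n$-support of every element; the unwanted coefficients in Lemmas~\ref{lem33} and~\ref{lem34} are then killed by exhibiting a single term of $c[u_1,u_k]$ (resp.\ $c[u_k,x]$) lying outside the permitted support. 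You normalize on $(u_1,u_2)$ instead, never touch $d_\mu$, and replace the support-preservation lemma by two comparisons: $\Delta(u_k)=\alpha_k[u_1,u_k]=\beta_k[u_2,u_k]$, where the two brackets have exponent supports contained in the disjoint affine hyperplanes $\sum_i\gamma_i=k+1$ and $\sum_i\gamma_i=k+2$ (note that disjointness alone already forces both sides to vanish, so your concern about nonvanishing of the brackets and the edge cases can be dispensed with); and $\Delta(y)=c_k[u_k,y]$, where the right-hand support escapes any fixed bounded region as $k\to\infty$, so equality with the fixed element $\Delta(y)$ forces zero. The only nitpick is quantitative: since the shifted exponents in $[u_k,y]$ have $i$-th coordinate $k+\gamma_i$, you should take $|k|$ greater than \emph{twice} the maximal coordinate occurring in $\mathrm{Supp}(y)\cup\mathrm{Supp}(\Delta(y))$ (compare the paper's choice $k>2n_x$), not merely greater than that maximum; this is cosmetic. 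Your route is slightly leaner in that it avoids Lemma~\ref{lem32} altogether; the paper's detour through $\Delta(d_\mu)=0$ buys a reusable support-preservation statement that it then recycles for $W_\infty$ and the $W_n^{+}$, $W_n^{++}$ variants.
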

For the proof of this Theorem we need to set up several Lemmas as preparations. Let $\mu$ be a generic vector in $\mathbb{C}^n$ and $d_\mu= \mu_1 d_1+ \cdots+ \mu_n d_n$.

\begin{lemma}\label{lem32} \textit{Let $\Delta$ be a 2-local derivation
on $W_n$ such that $\Delta(d_\mu)=0$.
Then for any nonzero element $x= \sum_{\alpha\in S} t^\alpha d_\alpha\in W_n$, where $S$ is  the finite subset consisting of  all $\alpha\in  \mathbb{Z}^n$ with $d_\alpha\in \mathfrak{h}\setminus\{0\}$, we have $\Delta(x)= \Delta(\sum_{\alpha\in S} t^\alpha d_\alpha)\in \sum_{\alpha\in S} \mathbb{C}t^\alpha d_\alpha$}.
\end{lemma}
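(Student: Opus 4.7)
The plan is to exploit the 2-local derivation property applied to the pair $(d_\mu, x)$. Since by Proposition 4.1 of \cite{DZ} every derivation of $W_n$ is inner, this property yields an element $a = a_{d_\mu, x} \in W_n$ with
$$[a, d_\mu] = \Delta(d_\mu) = 0 \quad \text{and} \quad \Delta(x) = [a, x].$$
Thus the whole lemma reduces to identifying the centralizer of $d_\mu$ in $W_n$ and then computing brackets of elements of that centralizer with the homogeneous pieces $t^\alpha d_\alpha$ of $x$.

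First I would determine the centralizer of $d_\mu$. Write $a = \sum_\gamma t^\gamma d_\gamma$ as a finite sum with $d_\gamma \in \mathfrak{h}\setminus\{0\}$. Applying the bracket formula from Section 2 with $\beta = 0$ and $d_\beta = d_\mu$ gives
$$[a, d_\mu] = \sum_\gamma t^\gamma \bigl((d_\gamma, 0)d_\mu - (d_\mu, \gamma) d_\gamma\bigr) = -\sum_\gamma (d_\mu, \gamma)\, t^\gamma d_\gamma.$$
Since the monomials $t^\gamma d_\gamma$ for distinct $\gamma$ are linearly independent, vanishing forces $(d_\mu, \gamma) = \mu\cdot\gamma = 0$ for every $\gamma$ occurring. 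By Definition~\ref{def} (genericity of $\mu$), this is only possible when $\gamma = 0$. Therefore $a \in \mathfrak{h}$.

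The rest is a routine calculation. For $a \in \mathfrak{h}$ and any $\alpha \in \mathbb{Z}^n$ with $d_\alpha \in \mathfrak{h}\setminus\{0\}$, the bracket formula gives
$$[a, t^\alpha d_\alpha] = t^\alpha\bigl((a, \alpha) d_\alpha - (d_\alpha, 0)\, a\bigr) = (a, \alpha)\, t^\alpha d_\alpha.$$
Summing over $\alpha \in S$ yields
$$\Delta(x) = [a, x] = \sum_{\alpha \in S} (a, \alpha)\, t^\alpha d_\alpha \in \sum_{\alpha \in S} \mathbb{C}\, t^\alpha d_\alpha,$$
which is exactly the claimed containment.

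The only step that is not purely mechanical is the centralizer computation, and even that is essentially immediate from the genericity of $\mu$ — no deeper analogue of Lemma~\ref{keylem} is required here, because commuting with the single element $d_\mu \in \mathfrak{h}$ is a much easier constraint (it forces $a$ into $\mathfrak{h}$) than commuting with the mixed-weight element $(t_1^k + \cdots + t_n^k)d_\mu$ treated in Lemma~\ref{keylem}. I therefore expect no real obstacle in carrying out this plan.
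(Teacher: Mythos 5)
Your argument is correct and follows exactly the same route as the paper: apply the 2-local property to the pair $(d_\mu,x)$, use genericity of $\mu$ to conclude that the element $a$ commuting with $d_\mu$ must lie in $\mathfrak{h}$, and then observe that $\mathrm{ad}(a)$ preserves each line $\mathbb{C}t^\alpha d_\alpha$. Your remark that no analogue of Lemma~\ref{keylem} is needed here is also consistent with the paper, which only invokes that lemma in the later steps.
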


\begin{proof}
For $x$ and $d_\mu$, there exists an element $a= \sum_{\beta\in \mathbb{Z}^n} t^\beta d_\beta\in W_n$, such that
$$\Delta(d_\mu)= [a,d_\mu], \ \Delta(x)= \Delta(\sum_{\alpha\in S} t^\alpha d_\alpha)= [a,\sum_{\alpha\in S} t^\alpha d_\alpha].$$
Then
$$0= \Delta(d_\mu)= [a,d_\mu]= [\sum_{\beta\in \mathbb{Z}^n} t^\beta d_\beta, d_\mu]= -\sum_{\beta\in \mathbb{Z}^n} t^\beta(d_\mu, \beta)d_\beta.$$
This means that $a= d_0\in \mathfrak{h}$. Thus
$$\Delta(x)= \Delta(\sum_{\alpha\in S} t^\alpha d_\alpha)= [d_0, \sum_{\alpha\in S} t^\alpha d_\alpha]\in \sum_{\alpha\in S} \mathbb{C} t^\alpha d_\alpha.$$
\end{proof}

\begin{lemma}\label{lem33} \textit{Let $\Delta$ be a 2-local derivation
on $W_n$ such that
$$
\Delta(d_\mu)=\Delta((t_1+ \cdots+ t_n)d_\mu)=0.
$$
Then $\Delta((t_1^k+ \cdots+ t_n^k)d_\mu)=0$ for all $k\in \mathbb{Z}$}.
\end{lemma}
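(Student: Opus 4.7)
The plan is to compute $\Delta(x_k):=\Delta((t_1^k+\cdots+t_n^k)d_\mu)$ in two complementary ways for each $k$ and match them. The case $k=1$ is the hypothesis, so I would fix $k\in\mathbb{Z}\setminus\{1\}$ throughout.

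First I would use Lemma \ref{lem32} to localize the support of $\Delta(x_k)$. Writing $x_k=\sum_{i=1}^{n} t^{k\epsilon_i}d_\mu$ with every $d_{k\epsilon_i}=d_\mu\in\mathfrak{h}\setminus\{0\}$, the lemma forces
$$\Delta(x_k)\in\bigoplus_{i=1}^{n}\mathbb{C}\,t_i^k\,d_\mu,$$
which in the case $k=0$ degenerates to $\Delta(nd_\mu)\in\mathbb{C}d_\mu$.

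Next I would apply the 2-local property to the pair $(t_1+\cdots+t_n)d_\mu$ and $x_k$, obtaining $a\in W_n$ with
$$[a,(t_1+\cdots+t_n)d_\mu] = 0\quad\text{and}\quad [a,x_k] = \Delta(x_k).$$
The first relation combined with Lemma \ref{keylem} (applied at $k=1$) pins down $a = \lambda(t_1+\cdots+t_n)d_\mu$ for some $\lambda\in\mathbb{C}$, and then the bracket formula $[t^\alpha d_\alpha, t^\beta d_\beta]= t^{\alpha+\beta}((d_\alpha,\beta)d_\beta-(d_\beta,\alpha)d_\alpha)$ gives
$$\Delta(x_k) = \lambda\sum_{i,j=1}^{n}(k\mu_j-\mu_i)\,t_i t_j^k\,d_\mu.$$

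Finally I would compare. The exponent vectors appearing on the right are $\epsilon_i+k\epsilon_j$ with $i\neq j$ (cross terms) and $(k+1)\epsilon_i$ (diagonal terms); a direct check shows none of them equals $k\epsilon_l$ for any $l\in\{1,\ldots,n\}$, so these monomials lie outside the span dictated by the first computation. Genericity of $\mu$ makes every cross-term coefficient $k\mu_j-\mu_i$ nonzero when $i\neq j$, since $-\epsilon_i+k\epsilon_j$ is a nonzero integer vector, and for $k\neq 1$ each diagonal coefficient $(k-1)\mu_i$ is also nonzero. Thus $\lambda=0$, yielding $\Delta(x_k)=0$. I expect the main obstacle to be precisely this monomial-disjointness/nondegeneracy check, which must be handled uniformly in $k\in\mathbb{Z}\setminus\{1\}$ and leans essentially on the generic hypothesis on $\mu$ together with the rigidity of $a$ provided by Lemma \ref{keylem}.
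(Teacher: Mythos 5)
Your proposal is correct and follows essentially the same route as the paper: apply the 2-local property to the pair $(t_1+\cdots+t_n)d_\mu$ and $x_k$, use Lemma \ref{keylem} to force $a=\lambda(t_1+\cdots+t_n)d_\mu$, use Lemma \ref{lem32} to confine $\Delta(x_k)$ to $\sum_i\mathbb{C}t_i^k d_\mu$, and conclude $\lambda=0$ from a monomial outside that span. The paper simply exhibits the single offending term $c(k-1)\mu_1 t_1^{k+1}d_\mu$ rather than writing out the full bracket, but the argument is the same.
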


\begin{proof}
Suppose $k\neq 0,1$. For $(t_1+ \cdots+ t_n)d_\mu$ and $ (t_1^k+ \cdots+ t_n^k)d_\mu $, there exists an element $a\in W_n$ such that
$$0= \Delta((t_1+ \cdots+ t_n)d_\mu)= [a, (t_1+ \cdots+ t_n)d_\mu],$$
$$\Delta((t_1^k+ \cdots+ t_n^k)d_\mu)= [a,(t_1^k+ \cdots+ t_n^k)d_\mu].$$
Then $a= c(t_1+ \cdots+ t_n)d_\mu$ for some  $c\in \mathbb{C}$ by Lemma \ref{keylem}. Using Lemma \ref{lem32} we have
$$\Delta((t_1^k+ \cdots+ t_n^k)d_\mu)= [c(t_1+ \cdots+ t_n)d_\mu, (t_1^k+ \cdots+ t_n^k)d_\mu]\in \sum_{i=1}^n \mathbb{C} t_i^k d_\mu.$$
It implies that $c= 0$, otherwise there exists a nonzero term $c(k-1)\mu_1 t_1^{k+1}d_\mu$ in $\Delta((t_1^k+ \cdots+ t_n^k)d_\mu)$ but not in $\sum_{i=1}^n \mathbb{C} t_i^k d_\mu$. Hence $\Delta((t_1^k+ \cdots+ t_n^k)d_\mu)=0$ for all $k\in \mathbb{Z}$.
\end{proof}

\begin{lemma}\label{lem34}
 \textit{Let $\Delta$ be a 2-local derivation on $W_n$ such that $\Delta((t_1^k+ \cdots+ t_n^k)d_\mu)=0$ for all $k\in \mathbb{Z}$. Then $\Delta= 0$}.
\end{lemma}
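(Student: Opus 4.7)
The plan is to fix an arbitrary $x\in W_n$ and deduce $\Delta(x)=0$ by combining 2-locality at the pair $\{x,(t_1^k+\cdots+t_n^k)d_\mu\}$ with the centralizer description of Lemma~\ref{keylem} and the support restriction of Lemma~\ref{lem32}. The key insight is that $k$ is a free parameter, and the hypothesis lets us exploit arbitrarily large $|k|$.

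First, I would observe that specializing the hypothesis to $k=0$ gives $\Delta(n\,d_\mu)=0$; applying 2-locality to the pair $\{d_\mu,\,n\,d_\mu\}$ produces a single $a$ with $\Delta(d_\mu)=[a,d_\mu]$ and $\Delta(n\,d_\mu)=n[a,d_\mu]=n\Delta(d_\mu)$, whence $\Delta(d_\mu)=0$. Lemma~\ref{lem32} then applies: writing $x=\sum_{\alpha\in S}t^\alpha d_\alpha$ for a finite $S\subset\mathbb{Z}^n$, we obtain the support condition
\[
\Delta(x)\in\sum_{\alpha\in S}\mathbb{C}\,t^\alpha d_\alpha.
\]

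Next, for each nonzero $k\in\mathbb{Z}$ I would invoke 2-locality on $\{x,(t_1^k+\cdots+t_n^k)d_\mu\}$ to produce $a_k\in W_n$ with $[a_k,(t_1^k+\cdots+t_n^k)d_\mu]=0$ and $\Delta(x)=[a_k,x]$. By Lemma~\ref{keylem}, $a_k=c_k(t_1^k+\cdots+t_n^k)d_\mu$ for some scalar $c_k$, so $\Delta(x)=c_k[(t_1^k+\cdots+t_n^k)d_\mu,x]$. A direct application of the bracket formula shows the right-hand side is supported on $\{\alpha+k\epsilon_i : \alpha\in S,\ 1\le i\le n\}$. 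Finally, I would choose $|k|$ large enough that (i) $\alpha+k\epsilon_i\notin S$ for every $\alpha\in S$ and every $i$, and (ii) $x\notin\mathbb{C}(t_1^k+\cdots+t_n^k)d_\mu$ (the latter holds for all but finitely many $k$ unless $x=0$, in which case $\Delta(x)=0$ is automatic). Condition (i) makes the support of $c_k[(t_1^k+\cdots+t_n^k)d_\mu,x]$ disjoint from $S$, while $\Delta(x)$ is supported in $S$, forcing both sides of $\Delta(x)=c_k[(t_1^k+\cdots+t_n^k)d_\mu,x]$ to vanish. Condition (ii), together with Lemma~\ref{keylem}, guarantees $[(t_1^k+\cdots+t_n^k)d_\mu,x]\neq 0$, so $c_k=0$ and hence $\Delta(x)=0$.

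The main obstacle I anticipate is reconciling the two competing descriptions of $\Delta(x)$: Lemma~\ref{lem32} forces its support into $S$, while the commutator $[(t_1^k+\cdots+t_n^k)d_\mu,x]$ necessarily shifts monomials off of $S$. The resolution, namely that $k$ may be chosen arbitrarily large so that this tension pins $c_k=0$, is the only non-bookkeeping step; the homogeneity bootstrap $\Delta(n\,d_\mu)=0\Rightarrow\Delta(d_\mu)=0$ and the verification that a nonzero $x$ is not proportional to $(t_1^k+\cdots+t_n^k)d_\mu$ for large $|k|$ are routine.
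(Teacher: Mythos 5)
Your proposal is correct and follows essentially the same route as the paper: use 2-locality at the pair $\{x,(t_1^k+\cdots+t_n^k)d_\mu\}$, pin down $a_k$ via Lemma~\ref{keylem}, and take $|k|$ large so that the support of $[(t_1^k+\cdots+t_n^k)d_\mu,x]$ leaves $S$, which contradicts the support constraint from Lemma~\ref{lem32} unless $\Delta(x)=0$. Your explicit bootstrap $\Delta(n\,d_\mu)=0\Rightarrow\Delta(d_\mu)=0$ (needed to legitimately invoke Lemma~\ref{lem32} from the stated hypothesis alone) is a small point the paper leaves implicit, and is a welcome addition.
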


\begin{proof}
Take an arbitrary nonzero element $x= \sum_{\alpha\in S} t^\alpha d_\alpha\in W_n$, where $S$ is the finite subset consisting of all $\alpha\in  \mathbb{Z}^n$ with $d_\alpha\in \mathfrak{h}\setminus\{0\}$. Let $n_x$ be an index such that $|\alpha_i|< n_x$ for any $\alpha\in S$. For the fixed $k> 2n_x$, there is an element $a\in W_n$ such that
$$0= \Delta((t_1^k+ \cdots+ t_n^k)d_\mu)= [a,(t_1^k+ \cdots+ t_n^k)d_\mu],$$
$$\Delta(\sum_{\alpha\in S} t^\alpha d_\alpha)= [a,\sum_{\alpha\in S} t^\alpha d_\alpha].$$
Then $a= c(t_1^k+ \cdots+ t_n^k)d_\mu$ for some $c\in \mathbb{C}$ by Lemma \ref{keylem}. Using Lemma \ref{lem32}, we have
$$\Delta(\sum_{\alpha\in S} t^\alpha d_\alpha)= [c(t_1^k+ \cdots+ t_n^k)d_\mu, \sum_{\alpha\in S} t^\alpha d_\alpha]\in \sum_{\alpha\in S} \mathbb{C}t^\alpha d_\alpha.$$
It implies that $c= 0$, otherwise there exists some $t_i$ whose degree is more than $n_x$ for any term in $\Delta(\sum_{\alpha\in S} t^\alpha d_\alpha)$. So $\Delta= 0$.
\end{proof}

Now we are in position to prove Theorem \ref{thm31}.

\textit{Proof of Theorem} \ref{thm31} Let $\Delta$ be a 2-local derivation on $W_n$. Take an element $a\in W_n$ such that
$$\Delta(d_\mu)= [a, d_\mu], \ \Delta((t_1+ \cdots+ t_n)d_\mu)= [a, (t_1+ \cdots+ t_n)d_\mu].$$
Set $\Delta_1=\Delta- \text{ad} (a).$ Then $\Delta_1$ is a 2-local
derivation such that
$$\Delta_1(d_\mu)=\Delta_1((t_1+ \cdots+ t_n)d_\mu)=0.$$
By Lemma \ref{lem33} and Lemma \ref{lem34}, it
follows that $\Delta_1=0.$ Thus $\Delta= \text{ad} (a)$ is a
derivation. The proof is complete.
\hfill$\Box$

By Theorem 3.4 in \cite{DZ1} any derivation on the generalized Virasoro algebra $W_n(\mu)$ can be seen as the restriction
of a inner derivation on $W_n$.  All the proofs in this section with minor modifications are valid
for the generalized Virasoro algebra $W_n(\mu)$. Therefore
we obtain the following consequence.

\begin{corollary}\label{lem36} \textit{
Let  $n\in\mathbb{N}$, and let $\mu\in\mathbb{C}^n$ be generic. Then any 2-local derivation
on the generalized Virasoro algebra $W_n(\mu)$  is a derivation.
}
\end{corollary}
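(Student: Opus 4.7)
The plan is to replay the proof of Theorem \ref{thm31} essentially verbatim, with $W_n$ replaced by $W_n(\mu)$. The essential preliminary input is Theorem 3.4 in \cite{DZ1}, which says that every derivation on $W_n(\mu)$ is the restriction to $W_n(\mu)$ of an inner derivation $\mathrm{ad}(a)$ of the ambient $W_n$. Combined with the 2-local condition, this means: for every pair $x,y\in W_n(\mu)$ there exists an element $a_{x,y}\in W_n$ such that $\Delta(x)=[a_{x,y},x]$ and $\Delta(y)=[a_{x,y},y]$, with both brackets automatically landing in $W_n(\mu)$; note that $a_{x,y}$ itself need not lie in $W_n(\mu)$, but this is harmless because all brackets below pair $a_{x,y}$ with elements of $W_n(\mu)$.

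First I would record the analog of Lemma \ref{lem32}: if $\Delta(d_\mu)=0$, then for every $x=\sum_{\alpha\in S}c_\alpha t^\alpha d_\mu\in W_n(\mu)$ (with $c_\alpha\in\mathbb{C}^\ast$) we have $\Delta(x)\in\sum_{\alpha\in S}\mathbb{C}\,t^\alpha d_\mu$. The proof is identical: $[a_{d_\mu,x},d_\mu]=0$ forces $a_{d_\mu,x}\in\mathfrak{h}$, and $\mathrm{ad}(\mathfrak{h})$ preserves each weight line $\mathbb{C}\,t^\alpha d_\mu$. Next, the analog of Lemma \ref{lem33}: under the assumption $\Delta(d_\mu)=\Delta((t_1+\cdots+t_n)d_\mu)=0$, pairing $(t_1+\cdots+t_n)d_\mu$ with $(t_1^k+\cdots+t_n^k)d_\mu$ and applying Key Lemma \ref{keylem} in $W_n$ forces $a=c(t_1+\cdots+t_n)d_\mu\in W_n(\mu)$; the bracket $[a,(t_1^k+\cdots+t_n^k)d_\mu]$ has to lie in $\sum_{i}\mathbb{C}\,t_i^k d_\mu$ by the analog of Lemma \ref{lem32}, but if $c\neq 0$ and $k\neq 0,1$ it contains the nonzero term $c(k-1)\mu_1 t_1^{k+1}d_\mu$, so $c=0$.

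Then the analog of Lemma \ref{lem34} goes through in the same way: for an arbitrary nonzero $x=\sum_{\alpha\in S}c_\alpha t^\alpha d_\mu\in W_n(\mu)$, pick $n_x$ with $|\alpha_i|<n_x$ for all $\alpha\in S$ and all $i$, and choose $k>2n_x$; pairing $(t_1^k+\cdots+t_n^k)d_\mu$ with $x$ gives $a=c(t_1^k+\cdots+t_n^k)d_\mu$ by Key Lemma \ref{keylem}, and the support constraint provided by the analog of Lemma \ref{lem32} (each term of $\Delta(x)$ must have all exponents of absolute value less than $n_x$) is incompatible with $c\neq 0$, so $\Delta(x)=0$.

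Finally, mirroring the proof of Theorem \ref{thm31}, I would choose $a\in W_n$ realizing $\Delta$ simultaneously at $d_\mu$ and at $(t_1+\cdots+t_n)d_\mu$, noting that by construction $\mathrm{ad}(a)|_{W_n(\mu)}$ is a genuine derivation of $W_n(\mu)$, so that $\Delta_1:=\Delta-\mathrm{ad}(a)|_{W_n(\mu)}$ is again a 2-local derivation of $W_n(\mu)$ satisfying the hypotheses of the three adapted lemmas; hence $\Delta_1=0$, giving $\Delta=\mathrm{ad}(a)|_{W_n(\mu)}$. There is no serious obstacle here: the only subtle point is the consistent bookkeeping of the ambient algebra in which $a_{x,y}$ lives (namely $W_n$, not $W_n(\mu)$), but since every bracket we compute is between $W_n$ and $W_n(\mu)$ it poses no problem.
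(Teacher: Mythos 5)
Your proposal is correct and follows exactly the route the paper intends: the paper's own ``proof'' of this corollary is the one-sentence remark that, by Theorem 3.4 of \cite{DZ1}, every derivation of $W_n(\mu)$ is the restriction of an inner derivation of $W_n$, so that Lemmas \ref{lem32}--\ref{lem34} and the proof of Theorem \ref{thm31} carry over with minor modifications. Your write-up simply carries out that adaptation explicitly (and correctly handles the one genuinely subtle point, namely that $a_{x,y}$ lives in $W_n$ while all brackets are taken against elements of $W_n(\mu)$).
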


\section{2-Local derivations on $W_\infty$}

In this section we shall determine all 2-local derivations on $W_\infty$.  For a given $n\in \mathbb{N}$, we know that $W_n=\text{Der}(\mathbb{C}[t_1^{\pm1},t_2^{\pm1},\cdots, t_n^{\pm1}])$ is a subalgebra of $W_\infty=\text{Der}(\mathbb{C}[t_1^{\pm1},t_2^{\pm1},\cdots])$. We still suppose that $\mu$ is a generic vector in $\mathbb{C}^n$ and $d_\mu= \mu_1 d_1+ \cdots+ \mu_n d_n$. The Cartan subalgebra of $W_\infty$ (resp. $W_n$) is denoted by $\mathfrak{h}_\infty$ (resp. $\mathfrak{h}_n$). For convenience we define
$$\aligned &K_n=\{\alpha\in \mathbb{Z}^\infty:\alpha_1=\alpha_2=\cdots=\alpha_n=0\},\\
&\mathfrak{h}_n'=\{h\in\mathfrak{h}_\infty:(h, \epsilon_i)=0\,\,\,\forall\,\,\, i=1,2,\cdots,n\}.\endaligned$$

\begin{lemma}\label{lem41} \textit{For a given $n\in \mathbb{N}$ and any $k\in \mathbb{Z}\setminus \{0\}$, the centralizer of $(t_1^k+ \cdots+ t_n^k)d_\mu$ in $W_\infty$ is
$$\sum_{ \beta\in K_n}\mathbb{C} t^\beta (t_1^k+ \cdots+ t_n^k)d_\mu+ \sum_{ \beta\in K_n} t^\beta \mathfrak{h}_n'.$$
}
\end{lemma}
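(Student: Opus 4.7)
The plan is to extend the argument of Lemma \ref{keylem} to $W_\infty$ by decomposing any centralizing element using the splitting $\mathfrak{h}_\infty=\mathfrak{h}_n\oplus \mathfrak{h}_n'$. Write $x=\sum_\alpha t^\alpha d_\alpha$ and decompose each $d_\alpha=d_\alpha^{(n)}+d_\alpha^{(>n)}$ accordingly; set $x^{(n)}=\sum_\alpha t^\alpha d_\alpha^{(n)}$, $x^{(>n)}=\sum_\alpha t^\alpha d_\alpha^{(>n)}$, and $z=(t_1^k+\cdots+t_n^k)d_\mu$. Using the bracket formula $[t^\alpha d, t^{k\epsilon_i}d_\mu]=t^{\alpha+k\epsilon_i}((d,k\epsilon_i)d_\mu-(d_\mu,\alpha)d)$ together with the observation that $(d,k\epsilon_i)=0$ whenever $d\in \mathfrak{h}_n'$ and $i\le n$, one checks that $[x^{(n)},z]$ has its derivation part in $\mathfrak{h}_n$ while $[x^{(>n)},z]$ has its derivation part in $\mathfrak{h}_n'$. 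The equation $[x,z]=0$ therefore decouples into the two independent conditions $[x^{(n)},z]=0$ and $[x^{(>n)},z]=0$, which I will solve separately.

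For the $\mathfrak{h}_n$-piece, regroup by the ``outside'' exponents $\gamma\in K_n$, writing $x^{(n)}=\sum_\gamma t^\gamma y_\gamma$ with $y_\gamma\in W_n$. Because $\gamma\in K_n$ and $z\in W_n$, a short computation gives $[t^\gamma y_\gamma,z]=t^\gamma\,[y_\gamma,z]_{W_n}$; linear independence of the distinct monomials $t^\gamma$ then forces $[y_\gamma,z]_{W_n}=0$ for every $\gamma$, and Lemma \ref{keylem} yields $y_\gamma=c_\gamma(t_1^k+\cdots+t_n^k)d_\mu$. Hence $x^{(n)}$ already lies in the first summand of the claimed centralizer. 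For the $\mathfrak{h}_n'$-piece, the same bracket formula collapses to $[x^{(>n)},z]=-(t_1^k+\cdots+t_n^k)\cdot\sum_\alpha (d_\mu,\alpha)\,t^\alpha d_\alpha^{(>n)}$, viewed inside $\mathfrak{h}_n'\otimes \mathbb{C}[t_1^{\pm1},t_2^{\pm1},\cdots]$. Since $t_1^k+\cdots+t_n^k$ is a non-zero-divisor in the Laurent polynomial algebra, the inner sum must vanish; comparing coefficients of the linearly independent monomials $t^\alpha$ and using $(d_\mu,\alpha)=0$ iff $\alpha\in K_n$ (genericity of $\mu$), I conclude that $d_\alpha^{(>n)}=0$ whenever $\alpha\notin K_n$, so $x^{(>n)}\in \sum_{\gamma\in K_n} t^\gamma \mathfrak{h}_n'$.

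The reverse inclusion is a direct check: $[t^\beta h,z]=0$ for $h\in \mathfrak{h}_n'$ and $\beta\in K_n$ because both $(h,k\epsilon_i)$ and $(d_\mu,\beta)$ vanish, while $[t^\beta(t_1^k+\cdots+t_n^k)d_\mu,z]=0$ follows by expanding and observing that the surviving coefficient $k(\mu_j-\mu_i)$ is antisymmetric in the pair of summation indices $(i,j)$. The only genuine obstacle is to recognize at the outset that the splitting $\mathfrak{h}_\infty=\mathfrak{h}_n\oplus \mathfrak{h}_n'$ really decouples the single equation $[x,z]=0$ into two independent ones; once that is in place, the $\mathfrak{h}_n$-piece reduces cleanly to the finite-rank Lemma \ref{keylem}, and the $\mathfrak{h}_n'$-piece becomes an elementary divisibility statement in a Laurent polynomial ring.
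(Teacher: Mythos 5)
Your proof is correct, but it is organized quite differently from the paper's. The paper re-runs the maximal-degree argument of Lemma \ref{keylem} inside $W_\infty$: it picks a term $t^\gamma d_\gamma$ of top degree in some $t_i$ with $i\le n$, shows it must be $c^{(i)}_\beta t^{\beta+k\epsilon_i}d_\mu$ with $\beta\in K_n$, subtracts the corresponding multiple of $t^\beta(t_1^k+\cdots+t_n^k)d_\mu$, and then identifies the residue as lying in $\sum_{\beta\in K_n}t^\beta\mathfrak{h}_n'$. You instead exploit the splitting $\mathfrak{h}_\infty=\mathfrak{h}_n\oplus\mathfrak{h}_n'$ to decouple $[x,z]=0$ into two independent equations (valid, since $(d,k\epsilon_i)=0$ for $d\in\mathfrak{h}_n'$, $i\le n$ keeps each bracket inside its own summand of $\bigoplus_\gamma t^\gamma\mathfrak{h}_n\oplus\bigoplus_\gamma t^\gamma\mathfrak{h}_n'$), then regrade the $\mathfrak{h}_n$-part over the outer exponents $\gamma\in K_n$ — using $(d_\mu,\gamma)=0$ to get $[t^\gamma y_\gamma,z]=t^\gamma[y_\gamma,z]_{W_n}$ — so that Lemma \ref{keylem} can be invoked as a black box for each $y_\gamma$, and dispose of the $\mathfrak{h}_n'$-part by the non-zero-divisor property of $t_1^k+\cdots+t_n^k$ together with genericity ($(d_\mu,\alpha)=0$ iff $\alpha\in K_n$). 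What your route buys is that the infinite-rank case genuinely reduces to the finite-rank lemma rather than repeating its proof, and the $\mathfrak{h}_n'$-component is handled by a one-line module-theoretic argument instead of another degree count; the paper's route has the advantage of being self-contained and uniform with the proof of Lemma \ref{keylem}. You also supply the reverse inclusion explicitly (the antisymmetry of $k(\mu_j-\mu_i)$), which the paper leaves implicit. No gaps.
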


\begin{proof} We firstly deal with the case $k>0$. Let $x\in  W_\infty\setminus\{0\}$ such that $[x, (t_1^k+ \cdots+ t_n^k)d_\mu]= 0$.
We may write
 \begin{equation}\label{2}x= \sum_{\alpha\in S} t^\alpha d_\alpha\end{equation}
 where $S$ is the finite subset consisting of  all $\alpha\in  \mathbb{Z}^\infty$ with $d_\alpha\in \mathfrak{h}_\infty\setminus\{0\}$.
Take a term $t^\gamma d_\gamma$ of maximal degree with respect to some $t_i$ ($1\leq i\leq n$) in the right-hand-side of (\ref{2}). Suppose that $\gamma\notin K_n$ (if it exists).

{\bf Claim 1.} We must have $t^\gamma d_\gamma=c_\beta^{(i)} t^{\beta+k\epsilon_i} d_\mu$ for some $\beta\in K_n$ and  $c_\beta^{(i)}\in\mathbb{C}.$

The term $[t^\gamma d_\gamma, t_i^k d_\mu]$ is of maximal degree with respect to $t_i$ in $[x, (t_1^k+ \cdots+ t_n^k)d_\mu]$. So
$$[t^\gamma d_\gamma, t_i^k d_\mu]= t^{\gamma+ k\epsilon_i}((d_\gamma, k\epsilon_i)d_\mu-
(d_\mu, \gamma)d_\gamma)= 0.$$ Since $(d_\mu, \gamma)\neq 0$, we deduce that  $d_\gamma= c_\beta^{(i)} d_\mu$,
$0\neq c_\beta^{(i)}\in \mathbb{C}$, yielding that
$$(d_\gamma, k\epsilon_i)d_\mu- (d_\mu, \gamma)d_\gamma= (c_\beta^{(i)} d_\mu, k\epsilon_i)d_\mu- (d_\mu, \gamma)c_\beta^{(i)} d_\mu= c_\beta^{(i)}(d_\mu, k\epsilon_i- \gamma)d_\mu= 0.$$
We see that  $\gamma= \beta+ k\epsilon_i$ for some $\beta\in K_n$. Claim 1 follows.

Now a finite sum $\sum_{ \beta\in K_n}c_\beta^{(i)} t^\beta t_i^k d_\mu$, where $c_\beta^{(i)}\in\mathbb{C}$, involves the only possible terms of maximal degree with respect to $t_i$ in (\ref{2}). If there is such a
finite sum, we consider $\sum_{ \beta\in K_n}c_\beta^{(1)} t^\beta t_1^k d_\mu$ without loss of generality. Hence, to delete the terms
$$[\sum_{ \beta\in K_n}c_\beta^{(1)} t^\beta t_1^k d_\mu, t_i^k d_\mu]= \sum_{ \beta\in K_n}c_\beta^{(1)} k(\mu_i- \mu_1)t^\beta t_1^k t_i^k d_\mu\neq 0,\ i\neq 1,$$
there must be the terms $\sum_{ \beta\in K_n}c_\beta^{(1)} t^\beta t_i^k d_\mu$ in (\ref{2}). Thus $c_\beta^{(i)}= c_\beta^{(1)}$ for all $1\leq i\leq n$.
Let
$$x'= x- \sum_{ \beta\in K_n}c_\beta^{(1)} t^\beta (t_1^k+ \cdots+ t_n^k)d_\mu.$$
Then $[x', (t_1^k+ \cdots+ t_n^k)d_\mu]= 0$. Applying Claim 1 to $x'$, we see that the terms of maximal degree with respect to any $t_i$ ($1\leq i\leq n$) in $x'$ are $\sum_{\beta\in K_n} t^\beta d_\beta$, $d_\beta\in \mathfrak{h}_\infty$. Therefore
$$[\sum_{ \beta\in K_n} t^\beta d_\beta,(t_1^k+ \cdots+ t_n^k)d_\mu]= \sum_{ \beta\in K_n, 1\le i\le n} t^{\beta+ k\epsilon_i} (d_\beta, k\epsilon_i) d_\mu= 0.$$
This implies $(d_\beta, \epsilon_i)= 0$ for all $ i= 1, \ldots, n$, i.e., $d_\beta\in\mathfrak{h}_n'$. Thus we must have
$x'\in \sum_{ \beta\in K_n} t^\beta \mathfrak{h}_n'$ by Claim 1.
For $k<0$, we may take terms of minimal degree instead. So for any $k\in \mathbb{Z}\setminus \{0\}$, the centralizer of $(t_1^k+ \cdots+ t_n^k)d_\mu$ in $W_\infty$ is only
$$\sum_{ \beta\in K_n}\mathbb{C} t^\beta (t_1^k+ \cdots+ t_n^k)d_\mu+ \sum_{ \beta\in K_n} t^\beta \mathfrak{h}_n'.$$
\end{proof}

\begin{lemma}\label{lem42} \textit{For a given $n\in \mathbb{N}$, let $\Delta$ be a 2-local derivation
on $W_\infty$ such that $\Delta(d_\mu)=0$.
Then for any nonzero element $x= \sum_{\alpha\in S} t^\alpha d_\alpha\in W_n$, where $S$ is the finite subset consisting of all $\alpha\in \mathbb{Z}^\infty$ with $\alpha_i= 0, i> n$ such that $d_\alpha\in \mathfrak{h}_n\setminus\{0\}$, we have
$$\Delta(x)= \Delta(\sum_{\alpha\in S} t^\alpha d_\alpha)\in \sum_{ \beta\in K_n}\sum_{\alpha\in S} \mathbb{C}t^{\alpha+ \beta} d_\alpha.$$
}
\end{lemma}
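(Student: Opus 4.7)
The plan is to follow the same strategy as in Lemma \ref{lem32}, but in $W_\infty$ the centralizer of $d_\mu$ is larger than just the Cartan, which is precisely what forces the weaker conclusion involving the extra factor $\sum_{\beta\in K_n}t^\beta$.

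First, I would apply the 2-local property to the pair $x$ and $d_\mu$ to obtain an element $a=\sum_{\gamma} t^\gamma d_\gamma \in W_\infty$ (a finite sum) such that
$$0=\Delta(d_\mu)=[a,d_\mu],\qquad \Delta(x)=[a,x].$$
The first equation, by the bracket formula $[t^\gamma d_\gamma,d_\mu]=-(d_\mu,\gamma)t^\gamma d_\gamma$, forces $(d_\mu,\gamma)=0$ for every $\gamma$ with $d_\gamma\ne 0$. Since $d_\mu=\mu_1 d_1+\cdots+\mu_n d_n$ and $\mu$ is generic in $\mathbb{C}^n$, $(d_\mu,\gamma)=\mu_1\gamma_1+\cdots+\mu_n\gamma_n=0$ forces $\gamma_1=\cdots=\gamma_n=0$, i.e.\ $\gamma\in K_n$. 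Hence $a\in\sum_{\gamma\in K_n}t^\gamma\mathfrak{h}_\infty$, the full centralizer of $d_\mu$ in $W_\infty$.

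Next, I would compute $[a,x]$ explicitly. The key observation is that for $\gamma\in K_n$ and $\alpha\in S$ (so that $d_\alpha\in\mathfrak{h}_n$, supported on coordinates $1,\ldots,n$), the pairing $(d_\alpha,\gamma)=\sum_{i\le n}(d_\alpha)_i\gamma_i$ vanishes because $\gamma_i=0$ for $i\le n$. Therefore
$$[t^\gamma d_\gamma,\,t^\alpha d_\alpha]=t^{\gamma+\alpha}\bigl((d_\gamma,\alpha)\,d_\alpha-(d_\alpha,\gamma)\,d_\gamma\bigr)=(d_\gamma,\alpha)\,t^{\gamma+\alpha}d_\alpha,$$
and summing over $\gamma\in K_n$ (finitely many) and $\alpha\in S$ gives
$$\Delta(x)=[a,x]\in\sum_{\gamma\in K_n}\sum_{\alpha\in S}\mathbb{C}\,t^{\alpha+\gamma}d_\alpha,$$
which is exactly the claimed inclusion.

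The argument is essentially routine once the right centralizer description is in hand; the only step that requires some attention is ensuring that the centralizer computation in $W_\infty$ (rather than $W_n$) correctly produces the set $\sum_{\gamma\in K_n}t^\gamma\mathfrak{h}_\infty$, and then verifying the ``orthogonality'' $(d_\alpha,\gamma)=0$ for $d_\alpha\in\mathfrak{h}_n$, $\gamma\in K_n$ which is what kills the second term of the bracket and keeps the image in the desired subspace. No inductive argument or bookkeeping on leading terms is required here, in contrast to Lemma \ref{keylem} or Lemma \ref{lem41}.
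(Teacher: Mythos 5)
Your proof is correct and follows essentially the same route as the paper: use the pair $(x,d_\mu)$ to get $a$ with $[a,d_\mu]=0$, conclude from genericity of $\mu$ that $a\in\sum_{\gamma\in K_n}t^\gamma\mathfrak{h}_\infty$, and then observe that the bracket with $x$ lands in the claimed subspace. Your explicit verification that $(d_\alpha,\gamma)=0$ for $d_\alpha\in\mathfrak{h}_n$ and $\gamma\in K_n$ (which kills the $d_\gamma$-term of the bracket) is a detail the paper leaves implicit, but the argument is the same.
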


\begin{proof}
For $x$ and $d_\mu$, there exists an element $a= \sum_{\beta\in \mathbb{Z}^\infty} t^\beta d_\beta\in W_\infty$, such that
$$\Delta(d_\mu)= [a,d_\mu], \ \Delta(x)= \Delta(\sum_{\alpha\in S} t^\alpha d_\alpha)= [a,\sum_{\alpha\in S} t^\alpha d_\alpha].$$
Then
$$0= \Delta(d_\mu)= [a,d_\mu]= [\sum_{\beta\in \mathbb{Z}^\infty} t^\beta d_\beta, d_\mu]= -\sum_{\beta\in \mathbb{Z}^\infty} t^\beta(d_\mu, \beta)d_\beta.$$
This means that $a=\sum_{ \beta\in K_n} t^\beta d_\beta$. Thus
$$\Delta(x)= \Delta(\sum_{\alpha\in S} t^\alpha d_\alpha)= [\sum_{ \beta\in K_n} t^\beta d_\beta, \sum_{\alpha\in S} t^\alpha d_\alpha]\in \sum_{ \beta\in K_n}\sum_{\alpha\in S} \mathbb{C}t^{\alpha+ \beta} d_\alpha.$$
\end{proof}

\begin{lemma}\label{lem43} \textit{For a given $n\in \mathbb{N}$, let $\Delta$ be a 2-local derivation
on $W_\infty$ such that
$$
\Delta(d_\mu)=\Delta((t_1+ \cdots+ t_n)d_\mu)=0.
$$
Then $\Delta((t_1^k+ \cdots+ t_n^k)d_\mu)=0$ for all $k\in \mathbb{Z}$}.
\end{lemma}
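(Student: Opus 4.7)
The plan is to mirror Lemma \ref{lem33} while accommodating the larger centralizer described in Lemma \ref{lem41}. First I would dispatch the two boundary cases. For $k=1$ the claim is hypothesized. For $k=0$ I would observe that every 2-local derivation is scalar homogeneous: applying the definition to the pair $(cx,x)$ yields $a\in W_\infty$ with $\Delta(cx)=[a,cx]=c[a,x]=c\Delta(x)$, so $\Delta(nd_\mu)=n\Delta(d_\mu)=0$.

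For $k\in\mathbb{Z}\setminus\{0,1\}$, I would apply the 2-locality definition to the pair $(t_1+\cdots+t_n)d_\mu$ and $(t_1^k+\cdots+t_n^k)d_\mu$ to produce $a\in W_\infty$ with $[a,(t_1+\cdots+t_n)d_\mu]=0$ and $\Delta((t_1^k+\cdots+t_n^k)d_\mu)=[a,(t_1^k+\cdots+t_n^k)d_\mu]$. Lemma \ref{lem41} (with $k=1$) then forces
$$a=\sum_{\beta\in K_n}c_\beta t^\beta(t_1+\cdots+t_n)d_\mu+\sum_{\beta\in K_n}t^\beta h_\beta,\qquad c_\beta\in\mathbb{C},\ h_\beta\in\mathfrak{h}_n'.$$
A routine bracket computation gives $[t^\beta h_\beta,t_i^k d_\mu]=0$ because $(d_\mu,\beta)=0$ for $\beta\in K_n$ and $(h_\beta,k\epsilon_i)=0$ for $h_\beta\in\mathfrak{h}_n'$, so the $\mathfrak{h}_n'$-part of $a$ contributes nothing.

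What remains of $[a,(t_1^k+\cdots+t_n^k)d_\mu]$ is
$$\sum_{\beta\in K_n}c_\beta\sum_{i,j=1}^n(k\mu_i-\mu_j)\,t^{\beta+\epsilon_j+k\epsilon_i}d_\mu.$$
Since $\Delta(d_\mu)=0$, Lemma \ref{lem42} applied to $x=(t_1^k+\cdots+t_n^k)d_\mu$ forces this sum to lie in $\sum_{\gamma\in K_n}\sum_{i'=1}^n\mathbb{C}t^{\gamma+k\epsilon_{i'}}d_\mu$, i.e., the first-$n$ projection of every occurring exponent must equal $k\epsilon_{i'}$ for some $i'$. The first-$n$ projection of $\beta+\epsilon_j+k\epsilon_i$ is $\epsilon_j+k\epsilon_i$: when $i=j$ it is $(k+1)\epsilon_i$, never of the form $k\epsilon_{i'}$ for $k\neq 0$; when $i\neq j$ the $j$-th coordinate is $1$, whereas every coordinate of $k\epsilon_{i'}$ lies in $\{0,k\}$, which fails for $k\neq 1$. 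Distinct $\beta\in K_n$ have disjoint support beyond the first $n$ coordinates, so there is no cross-cancellation, and each coefficient $c_\beta(k\mu_i-\mu_j)$ must vanish individually. Genericity of $\mu$ guarantees $k\mu_i-\mu_j\neq 0$, so all $c_\beta=0$, and hence $\Delta((t_1^k+\cdots+t_n^k)d_\mu)=0$.

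The main obstacle is the combinatorial bookkeeping in the last step: one must verify that neither the $\mathfrak{h}_n'$-tail of $a$ nor the doubly-indexed family $(\beta,i,j)$ can conspire to produce output whose first-$n$ projection lands in the allowed set $\{k\epsilon_{i'}\}_{i'=1}^n$. Both checks rely crucially on the defining properties $\beta\in K_n$ and $h_\beta\in\mathfrak{h}_n'$, which make the relevant pairings vanish and decouple supports across the parameter $\beta$.
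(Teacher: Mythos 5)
Your proof is correct and follows essentially the same route as the paper's: pair $(t_1+\cdots+t_n)d_\mu$ with $(t_1^k+\cdots+t_n^k)d_\mu$, invoke Lemma \ref{lem41} (with $k=1$) to pin down $a$, and use Lemma \ref{lem42} to conclude that any nonzero $c_\beta$ would produce a term whose exponent lies outside the allowed set (the paper simply exhibits the single offending term $c_\beta(k-1)\mu_1 t^{(k+1)\epsilon_1+\beta}d_\mu$ rather than surveying all pairs $(i,j)$). The only microscopic imprecision is that for $k=-1$ the diagonal exponents $\beta+(k+1)\epsilon_i$ coincide for all $i$, so those coefficients do not vanish ``individually''; their combined coefficient $-2c_\beta\sum_{i}\mu_i$ is still nonzero by genericity, so the conclusion stands.
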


\begin{proof}
Suppose $k\neq 0,1$. For $(t_1+ \cdots+ t_n)d_\mu$ and $ (t_1^k+ \cdots+ t_n^k)d_\mu$, there exists an element $a\in W_\infty$ such that
$$0= \Delta((t_1+ \cdots+ t_n)d_\mu)= [a, (t_1+ \cdots+ t_n)d_\mu],$$
$$\Delta((t_1^k+ \cdots+ t_n^k)d_\mu)= [a,(t_1^k+ \cdots+ t_n^k)d_\mu].$$
Then by Lemma \ref{lem41}
\begin{equation*}
a= \sum_{\beta\in K_n}c_\beta t^\beta (t_1+ \cdots+ t_n)d_\mu+ \sum_{ \beta\in K_n,\,\,  d_\beta\in \mathfrak{h}_n'}t^\beta d_\beta,\ c_\beta\in \mathbb{C}.
\end{equation*}
By Lemma \ref{lem42} we have
$$\Delta((t_1^k+ \cdots+ t_n^k)d_\mu)= [a, (t_1^k+ \cdots+ t_n^k)d_\mu]\in\sum_{ \beta\in K_n}\sum_{i=1}^n \mathbb{C} t^{k\epsilon_i+ \beta} d_\mu.$$
It implies that $c_\beta= 0$, otherwise there exists a nonzero term $c_\beta(k-1)\mu_1 t^{(k+1)\epsilon_1+ \beta} d_\mu$ in $\Delta((t_1^k+ \cdots+ t_n^k)d_\mu)$   not appearing  in $\sum_{ \beta\in K_n}\sum_{i=1}^n \mathbb{C} t^{k\epsilon_i+ \beta} d_\mu$. Hence $\Delta((t_1^k+ \cdots+ t_n^k)d_\mu)=0$ for all $k\in \mathbb{Z}$.
\end{proof}

\begin{lemma}\label{lem44}
 \textit{For a given $n\in \mathbb{N}$, let $\Delta$ be a 2-local derivation on $W_\infty$ such that $\Delta((t_1^k+ \cdots+ t_n^k)d_\mu)=0$ for all $k\in \mathbb{Z}$. Then $\Delta\big|_{W_n}= 0$}.
\end{lemma}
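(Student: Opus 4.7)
The plan is to mirror the proof of Lemma \ref{lem34}, replacing its scalar centralizer by the richer centralizer description from Lemma \ref{lem41}. First I want to invoke Lemma \ref{lem42}, which requires $\Delta(d_\mu)=0$. This is not literally among the hypotheses, but the case $k=0$ reads $\Delta(nd_\mu)=0$, and applying the 2-local property to the pair $d_\mu,\,nd_\mu$ produces an element $a$ with $\Delta(d_\mu)=[a,d_\mu]$ and $n[a,d_\mu]=\Delta(nd_\mu)=0$, whence $\Delta(d_\mu)=0$.

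Next, fix a nonzero $x=\sum_{\alpha\in S}t^\alpha d_\alpha\in W_n$ and choose an integer $n_x>|\alpha_i|$ for every $\alpha\in S$ and every $1\le i\le n$. For $k>2n_x$, I would apply the 2-local property to the pair $(t_1^k+\cdots+t_n^k)d_\mu$ and $x$ to obtain some $a\in W_\infty$ with $[a,(t_1^k+\cdots+t_n^k)d_\mu]=0$ and $[a,x]=\Delta(x)$. By Lemma \ref{lem41},
$$a=\sum_{\beta\in K_n}c_\beta\,t^\beta(t_1^k+\cdots+t_n^k)d_\mu+\sum_{\beta\in K_n}t^\beta h_\beta,$$
with $c_\beta\in\mathbb{C}$ and $h_\beta\in\mathfrak{h}_n'$.

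The second summand of $a$ annihilates $x$: for each $\alpha\in S$, $[t^\beta h_\beta,t^\alpha d_\alpha]=t^{\alpha+\beta}\bigl((h_\beta,\alpha)d_\alpha-(d_\alpha,\beta)h_\beta\bigr)=0$, since $h_\beta\in\mathfrak{h}_n'$ annihilates $\epsilon_1,\ldots,\epsilon_n$ while $\alpha$ is supported on those coordinates, and $\beta\in K_n$ has vanishing first $n$ components while $d_\alpha\in\mathfrak{h}_n$. Hence $\Delta(x)=\sum_{\beta\in K_n}c_\beta\,[t^\beta(t_1^k+\cdots+t_n^k)d_\mu,\,x]$.

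I would conclude by a degree comparison. Lemma \ref{lem42} forces each monomial in $\Delta(x)$ to have $t_i$-degree of absolute value strictly less than $n_x$ for every $i\le n$. On the other hand, each nonzero summand of $[t^\beta t_i^k d_\mu,t^\alpha d_\alpha]$ carries the factor $t^{\alpha+\beta+k\epsilon_i}$, whose $t_i$-degree equals $\alpha_i+k>n_x$. Since $k>2n_x$ exceeds the total range of the $\alpha$-coordinates, distinct triples $(\beta,i,\alpha)$ produce distinct monomials, so cancellation between them is impossible. Therefore every $c_\beta$ must vanish, and $\Delta(x)=0$. The main technical obstacle is precisely this non-cancellation verification: summands indexed by the infinite set $K_n$ and by $i\in\{1,\ldots,n\}$ could a priori interfere, but the gap $k>2n_x$ separates the high-degree contributions from those allowed by Lemma \ref{lem42} and uniquely recovers $(\beta,i,\alpha)$ from the monomial it produces.
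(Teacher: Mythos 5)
Your proof is correct and follows essentially the same route as the paper: pair $x$ with $(t_1^k+\cdots+t_n^k)d_\mu$ for $k>2n_x$, pin down $a$ via Lemma \ref{lem41}, discard the $\mathfrak{h}_n'$ part, and rule out the $c_\beta$ part by comparing degrees against the constraint from Lemma \ref{lem42}. You are in fact slightly more careful than the paper on two points it leaves implicit --- deriving $\Delta(d_\mu)=0$ from the $k=0$ hypothesis so that Lemma \ref{lem42} applies, and verifying that the high-degree monomials $t^{\alpha+\beta+k\epsilon_i}$ coming from distinct triples cannot cancel.
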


\begin{proof}
Take an arbitrary nonzero element $x= \sum_{\alpha\in S} t^\alpha d_\alpha\in W_n$, where $S$ is the finite subset consisting of all $\alpha\in \mathbb{Z}^\infty$ with $\alpha_i= 0, i> n$ such that $d_\alpha\in \mathfrak{h}_n\setminus\{0\}$. Let $n_x$ be an index such that $|\alpha_i|< n_x$ for any $\alpha\in S$. For the fixed $k> 2n_x$, there is an element $a\in W_\infty$ such that
$$0= \Delta((t_1^k+ \cdots+ t_n^k)d_\mu)= [a,(t_1^k+ \cdots+ t_n^k)d_\mu],$$
$$\Delta(\sum_{\alpha\in S} t^\alpha d_\alpha)= [a,\sum_{\alpha\in S} t^\alpha d_\alpha].$$
Then by Lemma \ref{lem41}
\begin{equation*}
a= \sum_{ \beta\in K_n}c_\beta t^\beta (t_1^k+ \cdots+ t_n^k)d_\mu+ \sum_{\beta\in K_n,\ d_\beta\in\mathfrak{h}_n'}t^\beta d_\beta,\ c_\beta\in \mathbb{C}.
\end{equation*}
By Lemma \ref{lem42}, we have
$$\Delta(\sum_{\alpha\in S} t^\alpha d_\alpha)= [a, \sum_{\alpha\in S} t^\alpha d_\alpha]\in \sum_{ \beta\in K_n}\sum_{\alpha\in S} \mathbb{C}t^{\alpha+ \beta} d_\alpha.$$
It implies that $c_\beta= 0$, otherwise there exists some $t_i$ ($1\leq i\leq n$) whose degree is more than $n_x$ for any term in $\Delta(\sum_{\alpha\in S} t^\alpha d_\alpha)$. So $\Delta\big|_{W_n}= 0$.
\end{proof}

\begin{theorem}\label{thm45} \textit{Any 2-local derivation
on  $W_\infty$ is a derivation.}
\end{theorem}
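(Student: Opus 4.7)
The plan is to adapt the argument of Theorem \ref{thm31} to the setting of $W_\infty$, taking into account the fact that derivations of $W_\infty$ are only locally inner. The key observation is that any finite set of elements in $W_\infty$ involves only finitely many of the variables $t_1,t_2,\ldots$, and so lies in some $W_n$. Thus it suffices to produce, for each $n$, a single element $a_n\in W_\infty$ such that $\Delta$ agrees with $\operatorname{ad}(a_n)$ on the whole subalgebra $W_n$.

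Fix $n\in\mathbb{N}$ and a generic vector $\mu\in\mathbb{C}^n$, and set $d_\mu=\mu_1 d_1+\cdots+\mu_n d_n$. First I would apply the 2-local property of $\Delta$ to the pair $d_\mu,\,(t_1+\cdots+t_n)d_\mu$, producing some $a_n\in W_\infty$ with $\Delta(d_\mu)=[a_n,d_\mu]$ and $\Delta((t_1+\cdots+t_n)d_\mu)=[a_n,(t_1+\cdots+t_n)d_\mu]$. Define $\Delta_n=\Delta-\operatorname{ad}(a_n)$. A short direct verification (absorbing $-\operatorname{ad}(a_n)$ into each witnessing derivation $D_{x,y}$) shows that $\Delta_n$ is again a 2-local derivation on $W_\infty$, and by construction
\[
\Delta_n(d_\mu)=\Delta_n\bigl((t_1+\cdots+t_n)d_\mu\bigr)=0.
\]

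Now Lemma \ref{lem43} applied to $\Delta_n$ yields $\Delta_n((t_1^k+\cdots+t_n^k)d_\mu)=0$ for every $k\in\mathbb{Z}$, after which Lemma \ref{lem44} gives $\Delta_n\big|_{W_n}=0$, that is,
\[
\Delta\big|_{W_n}=\operatorname{ad}(a_n)\big|_{W_n}.
\]
With this in hand, the derivation identities for $\Delta$ follow easily. Given any $x,y\in W_\infty$ and $c\in\mathbb{C}$, choose $n$ large enough that $x,y,x+cy,[x,y]$ all lie in $W_n$; then
\[
\Delta(x+cy)=[a_n,x+cy]=[a_n,x]+c[a_n,y]=\Delta(x)+c\Delta(y),
\]
\[
\Delta([x,y])=[a_n,[x,y]]=[[a_n,x],y]+[x,[a_n,y]]=[\Delta(x),y]+[x,\Delta(y)].
\]

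The main obstacle is the step $\Delta\big|_{W_n}=\operatorname{ad}(a_n)\big|_{W_n}$: one must control $\Delta$ on the entire infinite-dimensional subalgebra $W_n$ using information extracted from the 2-local property at only two elements, and this is exactly what Lemmas \ref{lem41}--\ref{lem44} are designed to accomplish. The passage from \emph{globally inner} (as in Section 3) to \emph{locally inner} is handled painlessly because the integer $n$ may be enlarged at will to accommodate any finite collection of elements under consideration.
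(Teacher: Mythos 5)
Your proposal is correct and follows essentially the same route as the paper's own proof: fix $n$ so that the elements under consideration lie in $W_n$, use the 2-local property at the pair $d_\mu$, $(t_1+\cdots+t_n)d_\mu$ to produce $a_n$, subtract $\operatorname{ad}(a_n)$, and invoke Lemmas \ref{lem43} and \ref{lem44} to conclude $\Delta\big|_{W_n}=\operatorname{ad}(a_n)\big|_{W_n}$, from which the derivation identities follow. No gaps.
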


\begin{proof}
Let $\Delta$ be a 2-local derivation on $W_\infty$. For any $x,y\in W_\infty$, there exists  $n\in\mathbb{N}$ such that  $x,y\in W_n< W_\infty$. Take an element $a\in W_\infty$ such that
$$\Delta(d_\mu)= [a, d_\mu],\ \Delta((t_1+ \cdots+ t_n)d_\mu)= [a, (t_1+ \cdots+ t_n)d_\mu].$$
Set $\Delta_1=\Delta- \text{ad} (a).$ Then $\Delta_1$ is a 2-local
derivation such that
$$\Delta_1(d_\mu)=\Delta_1((t_1+ \cdots+ t_n)d_\mu)=0.$$
By Lemma \ref{lem43} and Lemma \ref{lem44}, it
follows that $\Delta_1\big|_{W_n}= 0$ and $\Delta\big|_{W_n}= \text{ad}(a)\big|_{W_n}$. Then
$$\Delta(c x+ y)= \text{ad}(a)(c x+ y)= c\text{ad}(a)(x)+ \text{ad}(a)(y)= c\Delta(x)+ \Delta(y),\,\,\forall\,\, c  \in \mathbb{C},$$
$$\Delta([x,y])= \text{ad}(a)([x,y])= [\text{ad}(a)(x), y]+ [x, \text{ad}(a)(y)]= [\Delta(x), y]+ [x, \Delta(y)].$$
 Therefore $\Delta$ is a derivation.
\end{proof}

\section{2-Local derivations on $W_n^+,\ W_n^{++},\ W_\infty^{+}$ and $W_\infty^{++}$}

 For  $n\in\mathbb{N}$, we have the Witt algebra $W_n^+=\text{Der}(\mathbb{C}[t_1 ,t_2 ,\cdots, t_n])$ which is a subalgebra of $W_n$. We use $ \mathfrak{h}_n$ to denote the Cartan subalgebra of $W_n$ which is also a  Cartan subalgebra (not unique) of $W_n^+$. We know that
$$W_n^+= \sum_{\alpha\in \mathbb{Z}_+^n} t^\alpha \mathfrak{h}_n+ \sum_{i=1}^n \mathbb{C} t_i^{-1} d_i.$$
Furthermore $W_n^+$ has a subalgebra
$$W_n^{++}= \sum_{\alpha\in \mathbb{Z}_+^n} t^\alpha \mathfrak{h}_n.$$
It is well-known that $W_n^+$ is a simple Lie algebra, but $W_n^{++}$ is not.

Similarly, the Witt algebra $W_\infty^+=\text{Der}(\mathbb{C}[t_1 ,t_2 ,\cdots, ])$   is a subalgebra of $W_\infty$. We use $ \mathfrak{h}_\infty$ to denote the Cartan subalgebra of $W_\infty$ which is also a  Cartan subalgebra (not unique) of $W_\infty^+$. We know that
$$W_\infty^+= \sum_{\alpha\in \mathbb{Z}_+^\infty} t^\alpha \mathfrak{h}_\infty+ \sum_{i=1}^\infty \mathbb{C} t_i^{-1} d_i.$$
Furthermore $W_\infty^+$ has a subalgebra
$$W_\infty^{++}= \sum_{\alpha\in \mathbb{Z}_+^\infty} t^\alpha \mathfrak{h}_\infty.$$
It is well-known that $W_\infty^+$ is a simple Lie algebra, but $W_\infty^{++}$ is not.

 From Proposition 4.1 and Theorem 4.3 in \cite{DZ} we know that any derivation on $W_n^+$ (resp. $W_\infty^+$) is inner (resp. locally inner).
 Using   same arguments as the proof of  Proposition 3.3 in \cite{DZ1} we can show that
any derivation on  $W_n^{++}$ (resp.  $W_\infty^{++}$) is inner (resp. locally inner).

Hence, the proofs and conclusions with slight modifications  in Section 3 (resp. Section 4) are applicable to $W_n^+$ and $W_n^{++}$ (resp. $W_\infty^+$ and $W_\infty^{++}$). It is routine to verify this. We omit the details and directly    state   the following theorem.

\begin{theorem}\label{thm51} \textit{Let $n=1,2,\cdots,\infty$. Then every 2-local derivation
on  the Witt algebra $W_n^+$ or $\ W_n^{++}$ is a derivation.}
\end{theorem}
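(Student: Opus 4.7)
The plan is to mirror the arguments of Theorems~\ref{thm31} and \ref{thm45} in the restricted setting. We are already told that any derivation on $W_n^+$ or $W_n^{++}$ is inner and any derivation on $W_\infty^+$ or $W_\infty^{++}$ is locally inner, so a 2-local derivation $\Delta$ admits, for each pair $x,y$ in the algebra, an element $a_{x,y}$ in that algebra with $\Delta(x)=[a_{x,y},x]$ and $\Delta(y)=[a_{x,y},y]$. From this point the finite-rank and infinite-rank cases can be argued essentially as in Sections~3 and 4, provided we re-establish three structural ingredients: the centralizer lemma, the support-preservation lemma, and the pair of lemmas that kill $\Delta$ on a generating family.

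For the centralizer input, fix a generic $\mu\in\mathbb{C}^n$ (with $n$ finite, embedded into the infinite-rank algebras when necessary) and $k\geq 1$. The element $(t_1^k+\cdots+t_n^k)d_\mu$ lies in $W_n^{++}\subset W_n^+\subset W_n$, so its centralizer in either subalgebra equals the intersection of that subalgebra with the one-dimensional centralizer in $W_n$ supplied by Lemma~\ref{keylem}; since this centralizer already lies inside $W_n^{++}$, we recover the same one-dimensional answer. The analogous intersection with Lemma~\ref{lem41} delivers the centralizers in $W_\infty^+$ and $W_\infty^{++}$. Crucially, only $k\geq 2$ is ever needed downstream, so losing access to negative $k$ (which is unavoidable in the ``$+$'' world) costs us nothing.

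Next, the analog of Lemma~\ref{lem32} and of Lemma~\ref{lem42} hinges on the centralizer of $d_\mu$ in the subalgebra still being the Cartan diagonal $\mathfrak{h}_n$ (resp.\ $\mathfrak{h}_\infty$ with the same $K_n$-restriction as in Lemma~\ref{lem42}). Since $\mu$ is generic, $\text{ad}(d_\mu)$ has nonzero eigenvalue on every $t^\alpha d_j$ with $\alpha\neq 0$ and on each $t_i^{-1}d_i$ (with eigenvalue $-\mu_i$), so the centralizer comes out correctly in every case. Consequently, whenever $\Delta(d_\mu)=0$ the witness $a$ is diagonal and $[a,x]$ preserves the exponent-support of $x$. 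With this in hand, the analogs of Lemmas~\ref{lem33}--\ref{lem34} and of Lemmas~\ref{lem43}--\ref{lem44} transcribe using only positive $k$, and the conclusions of Theorems~\ref{thm31} and \ref{thm45} are re-run: in the finite case one directly obtains $\Delta=\text{ad}(a)$, and in the infinite case one observes that any pair $x,y$ lies in a common $W_n^+$ or $W_n^{++}$ and upgrades the local identity to full linearity and Leibniz exactly as at the end of the proof of Theorem~\ref{thm45}.

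The step requiring the most care will be the support bookkeeping in $W_n^+$ and $W_\infty^+$, since the exceptional summands $\mathbb{C}t_i^{-1}d_i$ introduce exponents outside $\mathbb{Z}_+^n$. However, because the eigenvalues $-\mu_i$ of $\text{ad}(d_\mu)$ on these vectors are nonzero and pairwise distinct, they separate cleanly from the polynomial part in every eigenspace argument used above, so the entire chain of lemmas goes through with only cosmetic modifications, as the authors assert.
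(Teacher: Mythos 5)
Your proposal is correct and follows exactly the route the paper intends: the authors omit the proof of Theorem~\ref{thm51}, asserting only that the arguments of Sections~3 and~4 carry over with slight modifications, and your sketch supplies precisely those modifications (centralizers obtained by intersecting with Lemmas~\ref{keylem} and~\ref{lem41}, the observation that only positive $k$ is needed, and the handling of the exceptional summands $\mathbb{C}t_i^{-1}d_i$ via the nonzero eigenvalues $-\mu_i$ of $\mathrm{ad}(d_\mu)$). No discrepancy with the paper's approach.
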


\vspace{2mm}
\noindent
{\bf Acknowledgments. } This research is partially supported by NSFC (11871190) and NSERC (311907-2015).  The authors are grateful to the referee for some good suggestions and for  providing \cite{AKY} where
the results in our paper for $W_n$ and $W_n^+$ were also obtained using different methods.

\end{document}